\def \ni{\noindent}
\newcommand{\be}{\begin{equation}}
\newcommand{\ee}{\end{equation}}
\newcommand{\ben}{\begin{equation*}}
\newcommand{\een}{\end{equation*}}
\newcommand{\bes}{\begin{eqnarray}}
\newcommand{\ees}{\end{eqnarray}}
\newcommand{\besn}{\begin{eqnarray*}}
\newcommand{\eesn}{\end{eqnarray*}}
\newcommand{\txt}{\textrm}
\newtheorem{theorem}{Theorem}
\newtheorem{lemma}{Lemma}
\newtheorem{definition}{Definition}
\newtheorem*{lemma*}{Lemma}
\begin{document}

\title{On the Ashtekar-Lewandowski Measure as a Restriction of the Product One}
\author{Tamer Tlas}

\begin{abstract}  \ni \textit{It is known that the $k$-dimensional Hausdorff measure on a $k$-dimensional submanifold of $\mathbb{R}^n$ is closely related to the Lebesgue measure on $\mathbb{R}^n$. We show that the Ashtekar-Lewandowski measure on the space of generalized $G$-connections for a compact, connected, semi-simple Lie group $G$, is analogously related to the product measure on the set of all $G$-valued functions on the group of loops. We also show that, the Ashtekar-Lewandowski measure is, under very mild conditions, supported on nowhere-continuous generalized connections.\\
}
\end{abstract}

\maketitle

It is a truism to state that many of the deepest mathematical ideas originated in physics. A particular instance of this fact is the infusion of quantum field theoretic methods into low dimensional topology which began with \cite{witten}. Experience has shown that quantum gauge theories are the most interesting and relevant ones for topological applications. The fundamental tool in this field is the rigorously-undefined Feynman integral over histories, which in the context of gauge theories, is an integral over the space of connections on some principal $G$-bundle modded out by gauge transformations. It might seem hopeless to define this integral. On one hand, the space of gauge equivalence classes of smooth connections is a rather complicated space; it is not even an infinite dimensional manifold. On the other hand, the difficulties in defining non-gaussian measures in the infinite dimensional setting are well-known. Fortunately, it turns out that these two difficulties solve each other in a certain sense, as there does exist an analogue of the Lebesgue measure on a suitable completion of the space of smooth connections. This measure is called the Ashtekar-Lewandowski (A-L) measure and shall be denoted by $\mu_{A-L}$ below. Let us briefly review its construction. The construction that is presented below is essentially that in \cite{baez}, but with some differences entailed by the fact that here we work with the group of loops instead of the path groupoid. \\

\section{the Ashtekar-Lewandowski Measure}

Fix a compact, connected Lie group $G$, a smooth principal $G$-bundle $P \to M$, a point $o \in M$ and a point $\ast$ in the fiber over $o$.  Let $\tilde{\mathcal{L}}$ be the set of all piecewise smooth loops which are immersive on each piece. In other words, $\tilde{\mathcal{L}}$ is the set of all maps from $I = [0,1]$ to $M$ such that if $\tilde{\gamma} \in \tilde{\mathcal{L}}$, then $\tilde{\gamma}(0) = o = \tilde{\gamma}(1)$, and there is a partition of $I$, $\{t_1, \dots, t_n \}$ such that $\tilde{\gamma}$ restricted to any interval of this partition is smooth and immersive (note that the (one-sided) derivative is required not to vanish at the endpoints of each interval). Let $\mathcal{A}_{smooth}$ stand for the set of smooth connections on $P$. Define an equivalence relation $\sim$ on $\tilde{\mathcal{L}}$ by declaring two loops equivalent if they are holonomically equivalent, i.e. if the holonomies around them are identical for any element in $\mathcal{A}_{smooth}$. Endowing $\tilde{\mathcal{L}}$ with the usual path product $\cdot$ and quotienting by $\sim$, it is easy to see that $\mathcal{L} \equiv (\tilde{\mathcal{L}} / \sim, \cdot)$ is a group. If we identify now, using $\ast$, holonomies around elements of $\mathcal{L}$ with elements of $G$, then it is straightforward to verify that any element of $\mathcal{A}_{smooth}$ is in fact a homomorphism from $\mathcal{L}$ to $G$. For reasons which will be apparent below, we shall denote the set of \textit{all} homomorphisms from $\mathcal{L}$ to $G$ by $\mathcal{A}$. \\

Let $Cyl$ stand for the algebra of cylindrical functions on $\mathcal{A}_{smooth}$, that is for the algebra of functions of the form

\ben
A \to f \big ( U(A, \gamma_1), \dots, U(A, \gamma_n)  \big ),
\een

where $A \in \mathcal{A}_{smooth}$, $U(A, \gamma_i)$ stands for the holonomy of $A$ around $\gamma_i$, which can be identified with an element of $G$, and $f$ is a continuous function from $G^n$ to $\mathbb{C}$. We shall denote the $Cyl$ element that $f$ defines by $\hat{f}$.  Let us now make two important definitions:

\begin{definition}
A finite collection of loops $\{ \gamma_1, \dots, \gamma_n \}$ is said to be holonomically independent if for any n-tuple of elements of $G$, $(g_1, \dots, g_n)$ there is $A \in \mathcal{A}_{smooth}$, such that $(g_1, \dots, g_n) = \big ( U(A, \gamma_1), \dots, U(A, \gamma_n) \big )$.
\end{definition}

\begin{definition}
A collection of loops $\{\gamma_1', \dots, \gamma_m' \}$ is said to generate the loops $\{\gamma_1, \dots, \gamma_n \}$ if the second collection is a subset of the subgroup of $\mathcal{L}$ generated by the first one.
\end{definition}

In \cite{baez} it was shown that for any finite collection of piecewise regular curves one can find another well-behaved, in a sense irrelevant for us here, collection of regular curves such that any element of the original collection is obtained from the new one by finitely many path multiplications and inverses . In \cite{lewandowski, fleischhack} it was demonstrated that the set of parallel transports of $A$ along such a well-behaved collection of $m$ curves is all of $G^m$ as $A$ ranges over $\mathcal{A}_{smooth}$ if $G$ is connected and semi-simple. It is not difficult to convert these results to the loop case. Let $K$ be the compact set which is equal to the union of the images of the paths $\gamma_1', \dots, \gamma_m'$ and let $x_1, \dots x_k$ be the set of the respective endpoints of these paths. We can find open, disjoint from $K$ and mutually disjoint sets $U_1, \dots, U_k \subset M$. For each endpoint we take a smooth, piecewise immersive path linking $o$ to the endpoint and passing through one of the $U_i$'s thus converting the paths into loops. It is obvious that the collection of loops thus obtained generates the loops from $\{\gamma_1, \dots, \gamma_n \}$. Also, this collection is still independent, since it follows from Proposition $A.1$ in \cite{fleischhack} that the connection can be adjusted on the disjoint sets $U_i$ such that its parallel transport on each of the new linking paths is trivial.\\

We therefore have that if $G$ is semi-simple, then given any finite collection of loops $\{ \gamma_1, \dots, \gamma_n \}$, there exists another finite collection $\{ \gamma_1', \dots, \gamma_m' \}$ such that the $\gamma'$'s are both holonomically independent and generate the $\gamma$'s. We shall use this fact repeatedly in what follows. Now, let $w_1, \dots, w_n$ be the words which give $\gamma_1, \dots, \gamma_n$ in terms of the alphabet $\{ \gamma_1', \dots, \gamma_m'\}$. Note that if for some $A$ the holonomies around the $\gamma'$'s are $(g_1, \dots, g_m)$, then the holonomies around the $\gamma$'s are the images of $(g_1, \dots, g_m)$ under the word maps defined by the words $w_1, \dots, w_n$. We now can define a linear operator on the set $Cyl$ via the formula:
 
 \be
 \label{eq:a-l}
 \int \hat{f} d\mu_{A-L} \equiv  \int_{G^m} f \big (  w_1 (g_1, \dots, g_m) , \dots, w_n (g_1, \dots, g_m)  \big ) \, dg_1 \dots dg_m.
 \ee

Here, $dg_i$ stands for the normalized Haar measure on the group $G$.\\

Of course a given function can be cylindrical with respect to many different families of loops. Additionally, for any such family, one has many choices for the loops $\gamma'$'s. It can be shown nonetheless that these choices are immaterial as they all give the same answer. The above definition of the operator is thus consistent. It is trivial now to extend it to the completion of $Cyl$ in the sup norm, denoted by $Cyl^{\star}$, and thus define a bounded linear transformation on this completion. Using the Gel'fand-Naimark theorem, it follows that $Cyl^{\star}$ is the set of continuous functions on a compact Hausdorff space, the spectrum of $Cyl^{\star}$. Applying now Riesz-Markov we see that this bounded linear transformation is the integral with respect to a probability measure. This measure is the A-L one. It should be clear that the domain of the A-L measure is the spectrum of $Cyl^{\star}$. This completes the review of the construction of the A-L measure.\\

The purpose of this paper is to give an alternative construction of this important measure. The starting point is to note that the spectrum of $Cyl^{\star}$ can be identified with $\mathcal{A}$, if the latter space is given an appropriate topology. To see this, start by noting that $\mathcal{A} \subset \mathcal{F}$ where $\mathcal{F}$ is the space of all functions from $\mathcal{L}$ to $G$. In turn, $\mathcal{F}$ is equal to the product space $\prod_{\gamma \in \mathcal{L}} G_{\gamma}$. Topologize the product space with the product topology. Below, we shall only discuss the case when $G$ is a compact and semi-simple. It follows by Tychonoff's theorem that in this case $\mathcal{F}$ is a compact space. Moreover, since $G$ is a Lie group and thus is Hausdorff, we have that $\mathcal{F}$ is also Hausdorff. The topology we shall put on $\mathcal{A}$ is the subspace one induced from $\mathcal{F}$. Note that $\mathcal{A}$ is in fact a closed set. To see this, take any net of elements $\phi_{\alpha}$ in $\mathcal{A}$ converging to some element $\phi \in \mathcal{F}$, and let $\gamma_1$ and $\gamma_2$ be two loops. From the definition of the product topology, the fact that each $\phi_{\alpha}$ is a homomorphism and the fact that multiplication is continuous in $G$, we have that$$\phi(\gamma_1 \gamma_2) = \lim \phi_{\alpha}(\gamma_1 \gamma_2) = \lim \phi_{\alpha}(\gamma_1) \lim \phi_{\alpha} (\gamma_2) = \phi(\gamma_1) \phi(\gamma_2).$$ Therefore, since a closed subspace of a compact space is compact, we have that $\mathcal{A}$ is also a compact space. We can now state 

\begin{lemma}
$Cyl^{\ast}$ is isometrically isomorphic to the algebra of continuous functions on $\mathcal{A}$.
\end{lemma}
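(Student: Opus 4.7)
The plan is to construct the isomorphism by ``tautological'' extension of cylindrical functions from $\mathcal{A}_{smooth}$ to $\mathcal{A}$, and then invoke Stone--Weierstrass. Given $\hat{f} \in Cyl$ represented by loops $\gamma_1, \ldots, \gamma_n$ and a continuous $f : G^n \to \mathbb{C}$, define
\ben
(\Phi \hat{f})(\phi) \equiv f\big(\phi(\gamma_1), \ldots, \phi(\gamma_n)\big), \qquad \phi \in \mathcal{A}.
\een
Each evaluation $\phi \mapsto \phi(\gamma)$ is a coordinate projection of $\mathcal{F}$ onto $G$, hence continuous in the product topology; composing with the continuous $f$ puts $\Phi \hat{f}$ in $C(\mathcal{A})$. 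The map $\Phi$ is formally a unital $*$-algebra homomorphism, modulo the well-definedness issue on different presentations of the same $\hat{f}$.

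Both the well-definedness and the isometric property $\|\Phi \hat{f}\|_{C(\mathcal{A})} = \|\hat{f}\|_{Cyl}$ will hinge on the reduction to holonomically independent loops. Since $\mathcal{A}_{smooth} \subset \mathcal{A}$, one has $\|\Phi \hat{f}\|_{C(\mathcal{A})} \geq \|\hat{f}\|_{Cyl}$ for free. For the reverse inequality, pick holonomically independent $\gamma_1', \ldots, \gamma_m'$ generating $\gamma_1, \ldots, \gamma_n$ via words $w_i$; the homomorphism property forces $\phi(\gamma_i) = w_i\big(\phi(\gamma_1'), \ldots, \phi(\gamma_m')\big)$ for any $\phi \in \mathcal{A}$, so the range of $\big(\phi(\gamma_1), \ldots, \phi(\gamma_n)\big)$ over $\phi \in \mathcal{A}$ is contained in the image of the word map $W : G^m \to G^n$. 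Holonomic independence says that this image is already attained by smooth connections, so the two ranges coincide and the sup of $|f|$ over them agree. The same argument, applied to the difference of two presentations of the same $\hat{f}$ via a common refinement with holonomically independent generators, shows independence of presentation.

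Once $\Phi$ is an isometric unital $*$-homomorphism on $Cyl$, it extends uniquely by continuity to an isometric $*$-homomorphism $\Phi : Cyl^{\star} \to C(\mathcal{A})$ with closed image. For surjectivity, apply Stone--Weierstrass on the compact Hausdorff space $\mathcal{A}$: the image contains the constants, is closed under complex conjugation, and separates points. The only nontrivial property is separation: if $\phi_1 \neq \phi_2$, then by definition of the product topology some $\gamma$ satisfies $\phi_1(\gamma) \neq \phi_2(\gamma)$ in $G$, and since $G$ is Hausdorff Urysohn's lemma produces a continuous scalar function on $G$ distinguishing these two values, which promotes to a cylindrical function distinguishing $\phi_1$ from $\phi_2$.

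The main obstacle is the sup-norm identity in the second paragraph; it is the only step that genuinely uses the structure of the problem, namely the existence of holonomically independent generators, which itself relies on semi-simplicity of $G$. It is also what forces the spectrum of $Cyl^{\star}$ to be the full algebraically-defined $\mathcal{A}$ rather than some strictly smaller quotient. Everything else (continuity of coordinate projections, Stone--Weierstrass, closedness of isometric embeddings) is routine functional analysis.
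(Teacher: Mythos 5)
Your proposal is correct and follows essentially the same route as the paper: the heart of both arguments is the existence of holonomically independent generators, which yields the equality of the ranges of $\big(\phi(\gamma_1),\dots,\phi(\gamma_n)\big)$ over $\mathcal{A}$ and over $\mathcal{A}_{smooth}$ (hence the isometry and well-definedness), combined with Stone--Weierstrass for surjectivity. The only cosmetic difference is that you build the map from $Cyl$ into $C(\mathcal{A})$ while the paper constructs its inverse $\iota$ from the finitely-supported subalgebra $F_0 \subset C(\mathcal{A})$ into $Cyl$.
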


\begin{proof}
We shall show that given any $\alpha \in \mathcal{A}$ and finitely many loops $\gamma_1, \dots, \gamma_n$ then there is a $\beta \in \mathcal{A}_{smooth}$ such that $\beta(\gamma_i) = \alpha(\gamma_i)$, (incidentally, this shows that $\mathcal{A}_{smooth}$ is dense in $\mathcal{A}$ but we shall not need this fact here). As mentioned above, there are loops $\gamma_1', \dots, \gamma_m'$ such that every one of $\gamma_1, \dots, \gamma_n$ is a product of the $\gamma'$'s and that for any $(g_1, \dots, g_m) \in G^m$ there is a smooth connection $A$ such that $(g_1, \dots, g_m) = (U(A, \gamma_1'), \dots, U(A, \gamma_m'))$. Therefore there is an element $\beta \in \mathcal{A}_{smooth}$ such that $\beta(\gamma_j') = \alpha(\gamma_j')$ for $j =1, \dots, m$. It follows from the fact that both $\alpha$ and $\beta$ are homomorphisms that $\beta(\gamma_i) = \alpha (\gamma_i)$ for $i=1, \dots, n$.\\

Let $F$ stand for the algebra of continuous, complex-valued functions on $\mathcal{A}$, and let $F_0$ be the subalgebra consisting of those functions which depend on only finitely many components of $\mathcal{A}$. It is trivial to see that $F_0$ is closed under conjugation, vanishes nowhere and separates points, and thus, by Stone-Weierstrass theorem, $F_0$ is dense in $F$. Note that any element $f \in F_0$ can be written as $\bar{f}\circ \pi_{\gamma_1, \dots, \gamma_n}$ where $\bar{f}$ is a continuous function from $G^n$ to $\mathbb{C}$ and $\pi_{\gamma_1, \dots, \gamma_n}$ is (the restriction to $\mathcal{A}$ of) the projection to the components corresponding to the loops $\gamma_1, \dots, \gamma_n$. \\

Let the map $\iota : F_0 \to Cyl$ be defined in the obvious way:

\ben
\iota (f) = \bar{f} \big ( U(. , \gamma_1) , \dots, U(. , \gamma_n) \big).
\een

It is easy to see that $\iota$ is a $*$-homomorphism. To see that $\iota$ is 1-1, assume $f \neq 0$ and suppose that $f$ depends only on the components corresponding to $\gamma_1, \dots, \gamma_n$. We thus have that $f(\alpha) \neq 0$ for some $\alpha \in \mathcal{A}$. It follows from the above discussion that there is a $\beta \in \mathcal{A}_{smooth}$ such that $\beta(\gamma_i) = \alpha(\gamma_i)$ for $i =1, \dots, n$ and thus that $f(\alpha) = f(\beta)$. But this means that $\iota (f )(\beta) \neq 0$ and thus $\iota$ is an injection. For surjectivity, let $\hat{f}$ be an element in $Cyl$ depending only on the holonomies along $\gamma_1, \dots, \gamma_n$ through a function $f$. Then note that $ \iota (f \circ \pi _{\gamma_1, \dots, \gamma_n}) = \hat{f}$ and $\iota$ is onto.\\

Now, the same proof as above when we showed that $\mathcal{A}_{smooth}$ is dense in $\mathcal{A}$ shows that the following equality between sets holds:

\ben
\{ ( (\alpha (\gamma_1), \dots, \alpha(\gamma_n) )  : \alpha \in \mathcal{A} \} = \{ (U(A, \gamma_1) , \dots, U(A, \gamma_n) ) : A \in \mathcal{A}_{smooth}  \}.
\een

It follows from this equality that $\iota$ is an isometry, and thus it extends to an isometry from $F$ to $Cyl^{\ast}$.
\end{proof}

Note that the above lemma implies at once that the spectrum of $Cyl^{\ast}$ can be naturally identified with $\mathcal{A}$. In view of this identification, the A-L measure can be considered to be a measure defined on $\mathcal{A}$.\\

Now, as was mentioned previously, $\mathcal{A} \subset \mathcal{F}$. However, thinking of $\mathcal{F}$ as a product space, it is obvious that $\mathcal{F}$ can be endowed with the product measure, where the measure on the individual factors of $G$ forming $\mathcal{F}$ is just the Haar one. Let us denote this measure by $\mu_{prod}$. Our goal is to relate $\mu_{A-L}$ to $\mu_{prod}$. \\

Before we do this let us clarify a technical point regarding $\mu_{prod}$ on $\mathcal{F}$. Usually, this measure would be defined on the product $\sigma-$algebra, which is the sigma algebra generated by sets of the form $\prod_{\gamma \in \mathcal{L}} B_\gamma$ where each $B_\gamma$ belongs to the Borel $\sigma-$algebra of $G$ and all but finitely many of these factors are equal to $G$ itself. This product $\sigma-$algebra is strictly smaller than the Borel sigma algebra on $\mathcal{F}$ coming from the product topology. However, it can be shown that the Lebesgue completion of the product algebra in our case contains the Borel $\sigma-$algebra on $\mathcal{F}$ (see, e.g., Theorem 7.14.3 in \cite{bogachev}, volume II). Thus $\mu_{prod}$ admits a unique extension to the Borel $\sigma-$algebra. It is this unique extension which is denoted by $\mu_{prod}$ in this paper. The reader who is unfamiliar with product measures should consult the relevant sections of \cite{bogachev}, in particular those dealing with products of measures and Fubini's theorem (Theorem 3.4.4 and Lemma 3.5.2 in \cite{bogachev}, volume I).

\section{The General Construction}

\subsection{Motivation}

What could the relationship between $\mu_{A-L}$ and $\mu_{prod}$ be? The most obvious thing to try is to check whether $\mu_{A-L}$ is simply the restriction of $\mu_{prod}$ to $\mathcal{A}$. However, since all the elements of $\mathcal{A}$ take the same value at the trivial loop (this value being the identity in $G$), it follows at once that $\mu_{prod} (\mathcal{A}) = 0$, and the restriction of $\mu_{prod}$ is a trivial measure and not a probability one.\\

We are thus faced with the problem of restricting a measure to a compact null subset. To gain some intuition, let us consider the problem of restricting the Lebesgue measure in $\mathbb{R}^n$ to a null subset. In this situation, we have in fact several, not unrelated, ways to proceed. Let us consider them and see if we can adapt one of them to the situation at hand.\\

One obvious possibility is to note that $\mathbb{R}^n$ carries a Riemannian metric and the measure on it is induced from this metric. Thus, if our null subset is a smooth submanifold, we could `restrict' the Lebesgue measure to it by pulling back the metric and then using it to construct the measure. This procedure seems to be inapplicable in our case since $\mathcal{F}$ is not a metrizable space, and thus on one hand, it is not clear how to make a smooth manifold out of it, and on the other hand, it does not carry a Riemannian metric. Additionally, it is not at all obvious that $\mathcal{A}$ is going to be a smooth submanifold. Finally, even if these problems are somehow circumvented, it is not obvious how to generalize to infinite dimensions the procedure which gives a measure from a Riemannian metric.\\

Now, while restricting the metric seems to hit an impasse, the discussion above does suggest another way to proceed. The point is to note that the above recipe gives, for a submanifold, nothing but (a multiple of) the Hausdorff measure of the appropriate dimension (the reader who is unfamiliar with Hausdorff measures should consult e.g. the last chapter of \cite{folland} for an introduction and \cite{federer} for a more sophisticated treatment). Perhaps we should try to generalize the Hausdorff measure construction to our case? The advantage, of course, being that we do not need any smooth structure on our sets in order to perform this construction. Unfortunately, this way of proceeding has its own shortcomings. First, one still needs a metric on the space under consideration (it does not have to be a Riemannian one though), or at least a way to assign volume to `balls' (defining what a ball is in a non-metrizable space is another issue). Also, one needs to know the Hausdorff dimension of the subset in advance in order pick the right Hausdorff measure, and the fact that the Hausdorff dimension of $\mathcal{A}$ is expected to be infinite is another source of difficulty. Finally, even if one finds a way to bypass the problems above, the resulting measure on the subset may still be a trivial one as there are sets whose Hausdorff dimension is $\alpha$ but whose $\alpha$-th Hausdorff measure is zero.\\

Nonetheless, the idea of using the Hausdorff measure is certainly appealing intuitively. We shall see that it is possible to adapt the construction of the Hausdorff measure in a way that will avoid essentially all the problems mentioned in the previous paragraph. The key fact that we need is that for `nice' sets, the Hausdorff measure can be induced in the following way \cite{federer}:\\

If $A \subset C \subset \mathbb{R}^n$, where $C$ is compact and $\alpha$-rectifiable, and $A$ is a Borel subset of $C$, then the $\alpha$-th Hausdorff measure of $A$ is equal to 

\be \label{eq:first} \lim_{\delta \to 0} \frac{|A_\delta|}{\delta^{n-\alpha}},\ee

where $|.|$ is the Lebesgue measure in $\mathbb{R}^n$ and $A_\delta$ is the set of all points in $\mathbb{R}^n$ whose distance to $A$ is less than $\delta$. Note that in general the expression above will only be proportional to the Hausdorff measure. However, the constant of proportionality will be the same for all subsets $A$. This constant depends on the normalization chosen for the Hausdorff measure. The reader may assume that we have made our choice so as to make this constant equal to 1.\\

Now, this may not seem to be much of an improvement over the usual procedure. First, one still needs to know the appropriate Hausdorff dimension ($\alpha$ above) in advance. Second, this dimension is most likely to be infinite in our case.  Third, one uses the metric in order to construct $A_\delta$. Thus, the above formula is still not good enough and we need to further modify or reformulate it. \\

To handle the first two problems, let us replace the denominator with $|C_\delta|$, i.e. by the set of all points in $\mathbb{R}^n$ within distance $\delta$ of $C$. The motivation for this is that for sufficiently `tame' $C$ (e.g. if $C$ is a submanifold) $|C_\delta|$ does not differ by much from the Hausdorff measure of $C$ multiplied by $\delta^{n- \alpha}$ for sufficiently small $\delta$'s. Additionally, this will normalize the resulting measure so it becomes a probability one. \\

It is not difficult now to handle the third issue. All we need to do is to note that $A_\delta$ is just an open neighborhood of $A$, and thus we can simply try to modify the above formula by replacing $A_\delta$ with some other class of open neighborhoods, one that does not require a metric to single it out. For example, the collection of all open neighborhoods of $A$ is a possible choice. From now on, we shall call any neighborhood of some set a `thickening' of the set.\\

Thus we arrive at a tentative definition of the restriction that we are looking for: \\

\textit{Let the restricted measure of $A$ be the limit of the ratio of the original measure of its thickening to the original measure of the thickening of $C$.}\\

This definition, while clearly applicable in our case, seems to have created a whole host of new issues to deal with, as it is not clear which thickenings to use, why does the limit exist, and finally why does it define a measure.\\

\subsection{Restricting Measures}

It turns out to be more convenient to avoid trying to make sense of the definition above, and to instead construct the integral first, and only obtain the measure later, via Riesz-Markov theorem or via Daniell construction. This route also offers the advantage of making the comparison with the A-L measure much simpler, as it would be sufficient to show that the integrals of cylindrical functions are the same with respect to both measures. \\

Therefore, let $f: C \to \mathbb{R}$ be a continuous function. Let $\{ C_i \}_{i \in \mathcal{I}}$ be a collection of thickenings of $C$ indexed by some set $\mathcal{I}$. We will want to take some kind of a limit in the end as $C_i$'s `shrink' to $C$. Therefore, let us assume that $I$ is in fact a directed set, and thus $C_i$ is a net of thickenings. Since $\mathbb{R}^n$ is normal and $C$ is closed, $f$ by Tietze's theorem admits an extension to all of $\mathbb{R}^n$, $\tilde{f} : \mathbb{R}^n \to \mathbb{R}$. We can now attempt to define the $\int_C f$ to be the limit of 

\be
\label{eq:second}
\frac{1}{|C_i|}\int_{C_i} \tilde{f}.
\ee

Now, while the above expression seems to generalize naturally to our case ($\mathcal{F}$ being compact and Hausdorff is normal and thus $\tilde{f}$ can be defined), there are two problems with it that must be solved:

\begin{itemize}
\item Are we guaranteed that the limit will exist? This might seem to be a big problem especially in view of the fact that (\ref{eq:first}), which was used as a motivation for (\ref{eq:second}), is only valid for rather special sets (e.g. rectifiable). Additionally, the problem of the existence of the limit is tied to the choice of the net of thickenings. It is not hard to see that it is possible to choose a sequence of thickenings of a two point set such that (\ref{eq:second}) will not have a limit.\\

The easiest way to solve these issues is to realize that we do \textit{not} need to solve them! Indeed, we only need to obtain from the net given by (\ref{eq:second}) a number, such that the result would satisfy linearity, positivity and which would be equal to the limit of the net when it does converge. For then, we will be able to apply Riesz-Markov or Daniell. In other words, we just need a linear positive functional on the space of bounded nets which extends the operation of taking a limit of a convergent net. Such objects are known to exist, we will pick one and will denote it by $L$ (the quickest way to show existence of such an object is via the Hahn-Banach theorem \cite{reed}). More precisely, let $\mathcal{N}$ be the $\mathbb{R}$-vector space of real-valued, bounded nets and $\mathcal{N}_c$ the subspace of convergent ones. Then $L$ is a linear, positive functional on $\mathcal{N}$ which satisfies $L(n_i) = \lim n_i$ when $n_i \in \mathcal{N}_c$.

 In view of the above we have

\be
\label{eq:third}
\int_C f = L \Big  (  \frac{1}{|C_i|}\int_{C_i} \tilde{f}
   \Big ).
\ee
\item The second issue we need to handle is to show that the right hand side of (\ref{eq:third}) is independent of the choice of the extension of $f$ used. The reason we need independence is that otherwise linearity will in general fail.\\

At this moment we shall switch the discussion from restricting the Lebesgue measure to null, compact subsets of $\mathbb{R}^n$ to the case of restricting a probability measure on a normal, compact space $X$ to a closed, null subset $C$. We shall still use ($\ref{eq:third}$) as a tentative definition for the integral, via a net of thickenings (open neighbourhoods of $C$) $\{C_i\}_{i \in \mathcal{I}}$ and via an extension $\tilde{f}$ of $f$.\\

Let us now make 

\begin{definition}
We call the net $\{ C_i \}_{i \in \mathcal{I}}$ \textit{nicely shrinking to $C$} if given any open set $U$ containing $C$, there exist an $i \in \mathcal{I}$ such that $\forall j \geq i$ we have $C_j \subset U$.
\end{definition}

It is not difficult now to prove the following

\begin{lemma}
If the net $\{ C_i \}_{i \in \mathcal{I}}$ is nicely shrinking, then the right-hand side of (\ref{eq:third}) is independent of the extension chosen for $f$ and defines a Borel, probability measure on $C$.
\end{lemma}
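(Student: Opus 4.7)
The plan is to view (\ref{eq:third}) as defining a functional
\[
\Lambda(f) = L\!\left(\frac{1}{\mu(C_i)} \int_{C_i} \tilde{f}\, d\mu\right)
\]
on $C(C)$, where $\mu$ is the ambient probability measure on $X$ and $\tilde f$ is any continuous extension of $f$ produced by Tietze's theorem. I would first verify that $\Lambda(f)$ is independent of the choice of $\tilde f$, then check that $\Lambda$ is linear, positive, and sends $1$ to $1$, and finally invoke the Riesz--Markov theorem on $C$ (a closed, hence compact, subspace of the compact Hausdorff space $X$) to produce the desired Borel probability measure.

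The crux is independence of the extension, and this is precisely the role the nicely-shrinking hypothesis is designed to play. Let $\tilde f_1,\tilde f_2\in C(X)$ be two extensions of $f$ and put $h=\tilde f_1-\tilde f_2$. Then $h$ is continuous on $X$ and vanishes identically on $C$. For any $\varepsilon>0$ the set $U_\varepsilon=\{x\in X:|h(x)|<\varepsilon\}$ is open and contains $C$, so the nicely-shrinking hypothesis supplies an index $i_0$ with $C_j\subset U_\varepsilon$ for every $j\ge i_0$, whence
\[
\left|\frac{1}{\mu(C_j)}\int_{C_j} h\,d\mu\right|\le \frac{1}{\mu(C_j)}\int_{C_j}|h|\,d\mu\le \varepsilon.
\]
Thus the net $\frac{1}{\mu(C_i)}\int_{C_i}h\,d\mu$ converges to $0$ in the ordinary sense, and since $L$ agrees with the usual limit on convergent nets it annihilates it. Using linearity of the integral and of $L$, this forces the two candidate values of $\Lambda(f)$ to coincide.

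The remaining verifications are then formal. Linearity of $\Lambda$ is immediate because $\tilde f+\tilde g$ is a continuous extension of $f+g$. For positivity, given $f\ge 0$ on $C$, one may replace an arbitrary continuous extension $\tilde f$ by $\max(\tilde f,0)\in C(X)$, which is still a continuous extension of $f$; the resulting net is pointwise non-negative, and positivity of $L$ gives $\Lambda(f)\ge 0$. Taking $f\equiv 1$ with extension $\tilde f\equiv 1$ yields the constant net $1$, so $\Lambda(1)=L(1)=1$. The Riesz--Markov theorem now supplies a unique regular Borel probability measure $\nu$ on $C$ with $\Lambda(f)=\int_C f\,d\nu$, which is the restricted measure. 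The only step that genuinely uses the hypothesis of the lemma is the independence argument; I expect that to be the real obstacle, since without the nicely-shrinking condition one could arrange two extensions of $f$ whose difference is bounded away from zero on an open neighbourhood of $C$ hit cofinally by the $C_i$, and the resulting ratios would fail to vanish in the limit.
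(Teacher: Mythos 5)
Your proposal is correct and follows essentially the same route as the paper: independence of the extension is obtained by noting that the difference of two extensions vanishes on $C$, using the nicely-shrinking hypothesis to trap the $C_j$ inside the open set where this difference is small, and using that $L$ extends ordinary limits; linearity then follows and Riesz--Markov yields the Borel probability measure. The only cosmetic difference is that you verify positivity directly by truncating the extension, whereas the paper gets there by bounding $|\tilde f|$ by $\|f\|_\infty$; both are standard and equivalent here.
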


\begin{proof}
Assume $\{ C_i \}_{i \in \mathcal{I}}$ is nicely shrinking and that there are two extensions $\tilde{f}_1$ and $\tilde{f}_2$, and let $\epsilon >0$. Let $U \supset C$ be the preimage of $(-\epsilon, \epsilon)$ under $\tilde{f}_1 - \tilde{f}_2$. By assumption, there is a $C_i \subset U$ and thus

\ben
\bigg | \frac{1}{|C_i|} \int_{C_i} ( \tilde{f}_1 - \tilde{f}_2)  \bigg | < \epsilon
\een

This equation will be true for all the $C_j$'s for $j \geq i$. It follows that $\Big| L \Big (  \frac{1}{|C_i|} \int_{C_i} ( \tilde{f}_1 - \tilde{f}_2) \Big ) \Big | < \epsilon$, and thus $\int_C f$ is independent of the extension. As a consequence, it follows at once that $\int_C f$ is linear. This combined with the fact that $|\tilde{f}|$ can be assumed to be bounded from above by $|| f ||_\infty$ implies by Riesz-Markov that the right hand side of (\ref{eq:third}) does indeed define an integral of $f$ with respect to a Borel measure on $C$. Finally, due to the normalization chosen, the measure is a probability one. This completes the proof. 
\end{proof}
\end{itemize}

The reader should note that $\mathcal{F}, \mathcal{A}$ and $\mu_{prod}$ do satisfy the conditions needed in the above lemma, and thus given a nicely shrinking net  of thickenings $\{ \mathcal{A}_i \}_{i \in \mathcal{I}}$, we can define

\be
\label{eq:restrict}
\int_\mathcal{A} f = L \Bigg ( \frac{1}{\mu_{prod} (\mathcal{A}_i )} \, \, \int_{\mathcal{A}_i} \tilde{f}  \, \, d \mu_{prod} \Bigg ).
\ee

The above formula solves the problem of restricting the product measure on $\mathcal{F}$ to $\mathcal{A}$, provided a suitable shrinking net of thickenings of $\mathcal{A}$ can be found. Moreover, we want to choose the net in such a way so as the left hand side of (\ref{eq:restrict}) is the integral with respect to the A-L measure. Is there a net which satisfies these two conditions? The answer is yes and shall be given below. Let us remark that it will turn out that the result will turn out to be independent of the choice of $L$ due to the fact that in our case the relevant nets (coming from cylindrical functions) shall be convergent.

\section{The Ashtekar-Lewandowski measure as a restriction of the product one}

Let us now demonstrate that there is a net that will give the Ashtekar-Lewandowski measure through (\ref{eq:restrict}). This is a crucial part of the construction as different choices of the net may drastically change the resulting measure. As an illustration, let us go back to (\ref{eq:third}), and let $C$ be a smooth submanifold, and $C_i = \bigcup_{x \in C} \frac{1}{i}O$, $i = 1, 2, \dots$, where $O$ is some bounded fixed open set ($\frac{1}{i} O$ is just a scaled version of $O$). Then the measure thus obtained in the case when $O$ is a ball \textit{differs} in general from that obtained when $O$ is a cube! Thus, one must be quite careful with the choice of the net if one aims to obtain a certain specific measure on the subset. Needless to say, if the goal is to obtain \textit{some} measure on $\mathcal{A}$ then probably the most natural choice for the net would be to let $\mathcal{I}$ be the collection of basis neighbourhoods for the identity element in $\mathcal{F}$ (note that $\mathcal{F}$ is naturally a topological group under pointwise multiplication) ordered by reverse inclusion (the smaller set is the `later' index). Then, if, for $i \in \mathcal{I}$, we let $\mathcal{A}_i = \bigcup_{x \in \mathcal{A}} x . i$, it is not difficult to see, by an argument similar to that in Theorem \ref{theorem:restrict}, that $\{ \mathcal{A}_i \}_{i \in \mathcal{I} }$ nicely shrinks to $\mathcal{A}$ and thus (\ref{eq:restrict}) gives a measure. However, the measure thus obtained does not seem to be the A-L one, hence the construction given in the text. \\

In order to describe the net that we want, we need to introduce the following

\begin{definition}
Let $X$ and $Y$ be Hausdorff, compact, measure spaces, with Borel, probability measures $\mu_x$ and $\mu_y$, and suppose that $X$ is second countable. Suppose $A \subset X \times Y$ is measurable (with respect to the product Borel measure), and define $A_x = \{ y \in Y : (x,y) \in A \}$. We will call $A_x$ a `slice' of $A$ at $x$. We shall say that $A$ is `equisliceable' as a function of $x$ if all its slices have the same measure, i.e. if $\mu_y (A_x)$ is a constant independent of $x$.
\end{definition}

It should be noted that the conditions that have been put  above on the spaces ensure that there is no mismatch between the product Borel $\sigma$-algebra and the Borel $\sigma$-algebra on the product. See 6.4 in volume II of \cite{bogachev}. Also, note that we do not need to require separately the measurability of $A_x$, this is because a slice of a Borel set is Borel (the quickest way to see this is to consider the collection of all sets whose slice is Borel, and to note that this collection is a $\sigma$-algebra containing all open sets, and thus contains the Borel $\sigma$-algebra).\\

Let now the set of all finite collections of elements of $\mathcal{L}$ be denoted by $\mathcal{J}$. Let $\tilde{\mathcal{I}}$ be the subset of  $\mathcal{J} \times \mathcal{J}$ consisting of those pairs of collections such that the second one generates the first while being at the same time holonomically independent. Order $\tilde{\mathcal{I}}$ by inclusion on the first pair (i.e. $(\xi_1, \eta_1) \prec (\xi_2, \eta_2) \iff \xi_1 \subset \xi_2$). Let $\mathcal{I} = \tilde{\mathcal{I}} \times (0,\infty)$ be ordered such that  $( i_1, r_1) \prec ( i_2, r_2) \iff i_1 \prec i_2 \, \, \txt{and} \, \, r_1 > r_2$. Note that $\mathcal{I}$ is a directed set. \\

We can now state

\begin{theorem}
\label{theorem:restrict}
There exists a nicely-shrinking net of thickenings of $\mathcal{A}$ indexed by $\mathcal{I}$, $\{ \mathcal{A}_i \}_{i \in \mathcal{I}}$, such that each thickening $\mathcal{A}_{i = (\xi, \eta), r}$ is equisliceable as a function of the values of the components corresponding to $\eta$. The measure that this net defines via (\ref{eq:restrict}) is the Ashtekar-Lewandowski one.
\end{theorem}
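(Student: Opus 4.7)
The plan is to build the thickenings explicitly using a bi-invariant metric $d$ on $G$, which exists because $G$ is compact. For each index $i = ((\xi, \eta), r) \in \mathcal{I}$ with $\xi = \{\gamma_1, \dots, \gamma_n\}$ and $\eta = \{\gamma_1', \dots, \gamma_m'\}$ generating $\xi$ via words $w_1, \dots, w_n$, I would set
\[
\mathcal{A}_i = \{ \psi \in \mathcal{F} : d(\psi(\gamma_j), w_j(\psi(\gamma_1'), \dots, \psi(\gamma_m'))) < r, \, j = 1, \dots, n \}.
\]
This set is open, and it contains $\mathcal{A}$ since homomorphisms saturate the word relations exactly. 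Slicing at a fixed $h = \psi|_\eta$, the constraints at $\gamma_j \in \xi \cap \eta$ are trivial, the constraints at $\gamma_j \in \xi \setminus \eta$ cut out open balls $B_r(w_j(h))$ in $G$, and all loops outside $\xi \cup \eta$ are unconstrained; bi-invariance of $d$ makes $\mu_{Haar}(B_r)$ independent of center, so the slice measure is the constant $\mu_{Haar}(B_r)^{|\xi \setminus \eta|}$, yielding equisliceability with respect to the $\eta$-components.

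For nice shrinking, given an open $U \supset \mathcal{A}$, compactness of $\mathcal{A}$ together with the structure of the product topology produces a finite set of loops $\xi_0$ and an open $W \subset G^{|\xi_0|}$ with $\pi_{\xi_0}(\mathcal{A}) \subset W$ and $\pi_{\xi_0}^{-1}(W) \subset U$ (the latter by covering $\mathcal{A}$ by finitely many basic opens inside $U$ and collecting the coordinates involved). Extending $\xi_0$ to a pair $(\xi_0, \eta_0) \in \tilde{\mathcal{I}}$ via the semisimplicity construction from the introduction, and using that $\pi_{\xi_0}(\mathcal{A})$ is compact in the finite-dimensional metrizable space $G^{|\xi_0|}$, one picks $r_0 > 0$ so that the $r_0$-neighborhood of $\pi_{\xi_0}(\mathcal{A})$ sits inside $W$. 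Then for any $i = ((\xi, \eta), r) \succeq ((\xi_0, \eta_0), r_0)$ and any $\psi \in \mathcal{A}_i$, the holonomic independence of $\eta$ furnishes a $\phi \in \mathcal{A}_{smooth}$ with $\phi|_\eta = \psi|_\eta$, hence $\phi(\gamma_j) = w_j(\psi(\eta))$ on all of $\xi$, so $\pi_{\xi_0}(\psi)$ lies within $r < r_0$ of $\pi_{\xi_0}(\phi) \in \pi_{\xi_0}(\mathcal{A})$, whence $\psi \in \pi_{\xi_0}^{-1}(W) \subset U$.

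For identification with $\mu_{A-L}$, by density of $Cyl$ in $C(\mathcal{A})$ (Lemma 1) it suffices to check a cylindrical function $\hat{f}$ with data $f : G^n \to \mathbb{C}$ and $\xi_* = \{\gamma_1, \dots, \gamma_n\}$. Choose the tautological extension $\tilde{\hat{f}}(\psi) = f(\psi(\gamma_1), \dots, \psi(\gamma_n))$, legal because Lemma 2 makes the extension immaterial. For cofinal indices $i = ((\xi, \eta), r)$ with $\xi \supset \xi_*$, Fubini along $\mathcal{F} = G^{|\eta|} \times \prod_{\gamma \notin \eta} G_\gamma$, together with the equisliceability calculation, gives
\[
\frac{1}{\mu_{prod}(\mathcal{A}_i)} \int_{\mathcal{A}_i} \tilde{\hat{f}} \, d\mu_{prod} = \int_{G^m} \frac{1}{\mu_{Haar}(B_r)^{|\xi \setminus \eta|}} \int_{\prod_{\gamma_j \in \xi \setminus \eta} B_r(w_j(h))} f(\psi(\gamma_1), \dots, \psi(\gamma_n)) \prod dg_j \, dh.
\]
Uniform continuity of $f$ makes the inner average differ from $f(w_1(h), \dots, w_n(h))$ by at most $\omega_f(r)$, uniformly in $(\xi, \eta, h)$, so the whole expression converges as $r \to 0$ to $\int_{G^m} f(w_1(h), \dots, w_n(h)) \, dh$, which is exactly the defining formula \eqref{eq:a-l} of $\mu_{A-L}$. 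The net converges, hence $L$ returns this limit regardless of the arbitrariness in its construction.

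The main obstacle I anticipate is the nicely-shrinking verification, where one must bridge the product topology on $\mathcal{F}$ (whose opens are determined coordinatewise) with the $r$-thickening prescription (which is defined uniformly across coordinates). The right tool is the compactness of $\pi_{\xi_0}(\mathcal{A})$ inside a finite-dimensional Lie group, combined with the semisimplicity input that every finite set of loops can be completed to a holonomically-independent generating pair. Once this is in place, the Fubini calculation identifying the limit with \eqref{eq:a-l} is essentially formal.
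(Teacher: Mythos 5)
Your proposal is correct and its core construction coincides with the paper's: the same index set $\mathcal{I}$, thickenings obtained by constraining the $\xi$-components to lie within $r$ of the word-image of the $\eta$-components (you use a product of balls $\prod_j B_r(w_j(h))$ where the paper uses a single ball in the product Riemannian metric on $G^n$ --- immaterial), equisliceability from translation invariance of the Haar measure, and a Fubini-plus-uniform-continuity estimate identifying the limit with the right-hand side of (\ref{eq:a-l}). The one place you genuinely diverge is the nicely-shrinking verification: the paper bounds $\mathcal{A}_{(\xi,\eta),r}$ by $\bigcup_{x\in\mathcal{A}} x\cdot U_{(\xi,\eta),r}$ and invokes a separate ``Lebesgue number'' lemma for compact topological groups (Lemma 3), whereas you argue directly that any open $U\supset\mathcal{A}$ contains a cylinder set $\pi_{\xi_0}^{-1}(W)$ with $W$ an open neighbourhood of the compact set $\pi_{\xi_0}(\mathcal{A})$ in the metrizable space $G^{|\xi_0|}$, and then use holonomic independence of $\eta$ to place each $\psi\in\mathcal{A}_i$ within $r$ of an honest element of $\mathcal{A}$ on the $\xi_0$-coordinates. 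Your route avoids the auxiliary group-theoretic lemma entirely and is somewhat more elementary; the paper's Lemma 3 is more general machinery (it would apply to thickenings of the form $\bigcup_x x\cdot V$ for arbitrary basic $V$, as the paper uses in its remark preceding the theorem). Two points you rely on implicitly, at the same level of rigor as the paper: the well-definedness of (\ref{eq:a-l}) under change of generating set $\eta$ (needed because different indices in the net carry different $\eta$'s, so the ``limit'' you exhibit must be the same number for all of them), and Lemma 2 to license your tautological choice of extension $f\circ\pi_{\xi_*}$ in place of the paper's $\bar f\circ w\circ\pi_\eta$. Both are legitimate.
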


\begin{proof}
Fix the collections $\xi = \{ \gamma_1, \dots, \gamma_n \}$ and $\eta = \{ \gamma_1', \dots, \gamma'_m \}$ where $\eta$ generates $\xi$ and $\eta$ is holonomically independent. Let $w_1, \dots, w_n$ be the words which give the $\gamma$'s in terms of $\gamma'$'s. For any $m$-tuple $g = (g_1, \dots, g_m) \in G^m$, let $w(g) \in G^n$ stand for the image of $g$ under $(w_1, \dots, w_n)$. Since $G$ is semi-simple, it has a natural invariant Riemannian metric, denoted by $\rho$. Let $\rho_k$ be the corresponding Riemannian metric on $G^k$. We shall also denote by $\rho_k$ the metric induced on $G^k$ from the Riemannian metric. Let $B ( x , r)$ stand for the ball in $G^k$ of center $x$ and radius $r$. Finally, we shall denote the Haar measure of a set $B \subset G^k$ by $|B|$. Note that since $\rho_k$ is translation-invariant, we have that $|B(x,r)|$ is independent of $x$.   \\

Define now for any $r >0$ the set $\mathcal{A}_{(\xi, \eta), r  }$ to be the subset of $\mathcal{F}$ given by 

\ben
\mathcal{A}_{(\xi, \eta), r}  \equiv    \bigsqcup_{ g \in G^m }  \bigg (  \overbrace{  \{ g \} }^{\eta}   \times   \overbrace{ B \big ( w(g) , r \big ) }^{\xi}  \times    \prod_{\gamma \notin ( \xi \cup \eta )} G_{\gamma} \bigg ).
\een

The first factor in brackets corresponds to the components of $\mathcal{F}$ corresponding to the elements of $\eta$. The second one to those corresponding to the elements of $\xi$. The reader is urged at this point to take a simple case for $\xi$ and $\eta$ (e.g. $\eta = \{ \gamma_1 \}$ and $\xi = \{ \gamma_1^2 \}$) and go through the subsequent arguments with the simple case in mind.\\

Let us prove that $\mathcal{A}_{(\xi, \eta), r}$ is open. It is clear that it is sufficient to prove that $$  \bigsqcup_{ g \in G^m }  \big (   \{ g \}    \times   B \big ( w(g) , r \big )  \big ) $$ is an open subset of $G^{m+n}$. Thus let $(x,y)  \in G^{m+n}$ be in this set. Let $$\delta = r - \rho_n\big( w(x), y) \big ).$$ Using uniform continuity of the word maps, find some $\delta' < \frac{\delta}{4}$ such that for all $g,h \in G^n$ $$\rho_m(g,h) < \delta' \implies \rho_n \big( w(g), w(h) \big) < \frac{\delta}{4}.$$ It follows that $B \big( (x,y) , \delta' \big) $ is contained in $\mathcal{A}_{(\xi, \eta), r}$ for if $(u,v) \in B\big((x,y), \delta' \big)$ then
\besn
\rho_n (v , w(u)) & \leq & \rho_n (v, y) + \rho_n (y, w(x)) + \rho_n (w(x) , w(u)) \\
& < & \frac{\delta}{4}+ (r - \delta) + \frac{\delta}{4}\\
&  = & r - \frac{\delta}{2}  <  r.
\eesn

Thus $\mathcal{A}_{(\xi, \eta), r}$ is open. We have used above the following fact from Riemannian geometry 
\be
\label{eq:inequality}
\rho_{m+n} \Big ( (x,y) , (u,v) \Big ) < \delta'   \implies \rho_m(x,u) < \delta' \, \, , \, \, \rho_n(y,v) < \delta' . 
\ee   

Here is a sketch of the proof of this fact: Let $M_1$ and $M_2$ be two Riemannian manifolds with metrics $g_1$ and $g_2$. Consider two points $(x_1, x_2)$ and $(y_1,y_2)$ in $M_1 \times M_2$, and let $\gamma_1$ and $\gamma_2$ be the two minimizing geodesics linking $x_1$ with $y_1$ and $x_2$ with $y_2$ respectively. Then by considering the energy functional it is easy to see that $\gamma = (\gamma_1, \gamma_2)$ is a minimizing geodesic in $M_1 \times M_2$ linking the two points with respect to the metric $g_1 \oplus g_2$. We then have $$\txt{length} (\gamma)  =  \int \sqrt{ (g_1 \oplus g_2) (\dot{\gamma} , \dot{\gamma})    }  
 =  \int \sqrt{ g_1(\dot{\gamma_1}, \dot{\gamma_1}) + g_2 (\dot{\gamma_2}, \dot{\gamma_2})    }
 \geq  \int \sqrt{ g_1(\dot{\gamma_1}, \dot{\gamma_1}) }
 =  \txt{length}(\gamma_1).$$ Implication (\ref{eq:inequality}) now follows since the metric on a compact Riemannian manifold coincides with the length of a minimizing geodesic.
\\

Since $\mathcal{A}_{(\xi, \eta), r}$ obviously includes the set $$\bigsqcup_{ g \in G^m }  \bigg (  \overbrace{ \{ g \} }^\eta \times \overbrace{\{ w(g) \}}^\xi   \times    \prod_{\gamma \notin ( \xi \cup \eta )} G_{\gamma} \bigg ),$$ it contains $\mathcal{A}$ and is thus a thickening of $\mathcal{A}$. Finally, the fact that $\mathcal{A}_{(\xi, \eta), r}$ is equisliceable as a function of $g$ is immediate, as the measure of each slice is equal to $|B \big ( w(g), r\big )|$ and is thus independent of $g$ (the set $X$ from Definition 4 is $G^m$ here, while the set $Y$ is $G^n \times \prod_{\gamma \notin \xi \cup \eta} G_{\gamma} $)\\

Let us show that the net defined above nicely-shrinks to $\mathcal{A}$. Note that 

\ben
\mathcal{A}_{(\xi, \eta), r} \subset \bigcup_{x \in \mathcal{A} } x \cdot U_{ (\xi, \eta),r},
\een

where $$U_{(\xi,\eta),r} = B(e,r) \times B ( e, r) \times \prod_{\gamma \notin  (\xi \cup \eta) } G_{\gamma}.$$ The first ball above is centered around the identity in $G^m$ (corresponding to the the elements of $\eta$) while the second one is centered around the identity in $G^n$ (corresponding to the elements of $\xi$). The letter $e$ stands for the identity in $G^k$ where $k$ should be clear from the context. We have also used the fact that $\mathcal{F}$ is a topological group under pointwise multiplication to define the translated set $x \cdot U_{(\xi,\eta),r}$. \\

Note that the collection of all $U_{(\xi, \eta),r}$'s forms a basis of neighbourhoods around the identity element in $\mathcal{F}$. We now utilize 

\begin{lemma}
If $\mathcal{G}$ is some compact, topological group, then given any open cover $\{ O_j \}_{j \in J}$ of $\mathcal{G}$ we can find a basis neighborhood of the identity, $V$, such that $x\cdot V \subset O_{j_x}$ for each $x \in \mathcal{G}$.
\end{lemma}

This statement is analogous to saying that any open cover of a compact set in a metric space has a Lebesgue number. We defer the proof of this lemma till later. Now, applying its statement to the open cover $\{ O, \mathcal{F} - \mathcal{A} \}$, where $O$ is an open neighbourhood of $\mathcal{A}$, we would get that there is a neighborhood of the identity $V$, such that for every $x \in \mathcal{A}$, $x \cdot V \subset O$ (since it cannot be contained in $\mathcal{F} - \mathcal{A}$). Then, by taking $\xi$ to be large enough and $r$ small enough, we would have that $$\mathcal{A}_{(\xi, \eta), r} \subset \bigcup_{x \in \mathcal{A}} x \cdot U_{(\xi, \eta),r} \subset \bigcup_{x \in \mathcal{A}} x \cdot V \subset O,$$ and thus the net would nicely-shrink to $\mathcal{A}$. \\

It remains to prove that the measure associated to this net is the A-L one. It is enough to show that the integrals of cylindrical functions are given by (\ref{eq:a-l}). Let $\hat{f} \in Cyl$ be of the form $ A \to \bar{f} \big ( U(A, \gamma_1), \dots, U (A, \gamma_n) \big )$ where $\bar{f}$ is a continuous function on $G^n$. Denote by $f$ the element of $F_0$ corresponding to $\hat{f}$ via the map $\iota$ from Lemma 1. Let $\xi = \{ \gamma_1, \dots, \gamma_n \}$ and let $\eta = \{ \gamma_1', \dots, \gamma_m' \}$ be a collection of holonomically independent loops which generate the elements of $\xi$. Fix $\epsilon > 0$. Choose the extension $\tilde{f}$ of $f$ to be the one given by $\tilde{f}(\alpha) = \bar{f} ( w ( \pi_\eta (\alpha)) )$ where $\pi_\eta : \mathcal{F} \to G^m$ is the projection on the components corresponding to $\eta$, $w$ is the same as before and $\alpha \in \mathcal{F}$. Choose $r_0 >0$ to be such that $$\forall h, k \in G^m \, \, : \, \rho_m ( h , k) < r_0 \implies | \bar{f}(w(h)) - \bar{f}(w(k)) | < \epsilon.$$ Let $ j \in \mathcal{I}$ be such that $$j = ((\xi'', \eta''), r ) \succ \Big( \big((\xi \cup \eta), \eta' \big) , r_0  \Big).$$ Letting $\tilde{f_0}$ be the function which is equal to $\tilde{f}$ on $\mathcal{A}_j$ and zero elsewhere, we have by Fubini (c.f. Lemma 3.5.2 in \cite{bogachev} volume I):

\besn
\frac{1}{\mu_{prod} (\mathcal{A}_j)} \int_{\mathcal{A}_j} \tilde{f} \: d \mu_{prod} & = & 
\frac{1}{\mu_{prod} (\mathcal{A}_j)} \int_{\mathcal{F}} \tilde{f_0} \: d \mu_{prod} \\
& = & 
\frac{1}{\mu_{prod}(\mathcal{A}_j)} \int_{G_{\eta''} } dg_{\eta''}\int_{ G_{\eta''^c}} dg_{\eta''^c} \tilde{f_0} \\
& = & \frac{1}{\mu_{prod}(\mathcal{A}_j)} \int_{G_{\eta''} } dg_{\eta''}\int_{ \pi_{\eta''^c} \big ( \mathcal{A}_j \cap P_{g_{\eta''}}  \big ) } dg_{\eta''^c} \tilde{f},\\
\eesn

where $G_{\eta''}$ is the product of copies of $G$ one for each element of $\eta''$, $dg_{\eta''}$ is the Haar measure on this product, $dg_{\eta''^c}$ is the product measure on the product of copies of $G$ one for each element in the complement $\eta''^c$ of $\eta''$ in $\mathcal{L}$. This product is naturally denoted by $G_{\eta''^c}$. Also, $P_{g_{\eta''}}$ is the subset of $\mathcal{F}$ whose $\eta''$ components have the given fixed value $g_{\eta''}$. Finally, $\pi_{\eta''^c}$ is the projection of $\mathcal{F}$ onto $G_{\eta''^c}$. Note that $ \pi_{\eta''^c} \big ( \mathcal{A}_j \cap P_{g_{\eta''}}  \big )$ is a viable domain of integration as it is a slice of Borel set ($\mathcal{A}_j$) and is thus Borel (alternatively, in this particular case it is easy to see that it is open).
\\

Let us denote by $w_1', \dots, w_m'$ the words which give $\eta$ in terms of $\eta''$, and $w'$ the word map they define from $G_{\eta''}$ to $G_{\eta}$, where $G_\eta$ is of course the product of copies of $G$ one for each element of $\eta$. It follows from (\ref{eq:inequality}) and by construction of $\mathcal{A}_j$ (note that $\xi \subset \xi''$) that $$h \in \pi_{\eta} \big ( \mathcal{A}_j \cap P_{g_{\eta''}}) \implies \rho_m ( h , w'(g_{\eta''}   ) )< r_0,$$ where $\pi_{\eta}$ stands for the projection of $\mathcal{F}$ onto the components corresponding to $\eta$. We thus have that for all such $h$ that $$\Big |\bar{f} (w(h)) - \bar{f} \big((w \circ w'\big) ( g_{\eta''}    )  )   \Big | < \epsilon,$$ where $\pi_{\eta''}$ stands for the projection of $\mathcal{F}$ onto the components corresponding to $\eta''$. But this means that, since $\mathcal{A}_j$ is equisliceable with respect to $g_{\eta''}$ with the measure of each slice being equal to $\mu_{prod}(\mathcal{A}_j)$ (i.e. $\mu_{\eta''^c}\Big ( \pi_{\eta''^c} \big ( \mathcal{A}_j \cap P_{g_{\eta''}}  \big ) \Big ) = \mu_{prod}(\mathcal{A}_j) $), we have for all $g_{\eta''} \in G_{\eta''}$ that

\ben
\Big \vert \, \, \, \bar{f} \big((w \circ w') (g_{\eta''}) \big)   -  \frac{1}{\mu_{prod}(\mathcal{A}_j) }  \int_{ \pi_{\eta''^c} \big ( \mathcal{A}_j \cap P_{g_{\eta''}}  \big ) } dg_{\eta''^c} \tilde{f}      \, \, \,  \Big \vert < \epsilon,
\een

and thus that

\ben
\Big \vert \, \, \,  \frac{1}{\mu_{prod} (\mathcal{A}_j)} \int_{\mathcal{A}_j} \tilde{f} d \mu_{prod} - \int_{G_{\eta''}} dg_{\eta''} \bar{f} \big((w \circ w') (g_{\eta''}) \big) \, \, \, \Big | < \epsilon
\een

Since this is true for all $j$'s eventually, and since $$\int_{G_{\eta''}} dg_{\eta''} \bar{f} \big((w \circ w') (g_{\eta''}) \big) = \int_{\mathcal{A}} f d \mu_{A-L},$$ we have that 

\ben
\Big \vert  L \Big ( \frac{1}{\mu_{prod} (\mathcal{A}_j )} \int_{\mathcal{A}_j} \tilde{f} d \mu_{prod} \Big ) - \int_{\mathcal{A}} f d \mu_{A-L} \Big \vert < \epsilon.
\een

This equation holds for all $\epsilon > 0$, which means that 

\ben
L \Big ( \frac{1}{\mu_{prod} (\mathcal{A}_j )} \int_{\mathcal{A}_j} \tilde{f} d \mu_{prod} \Big ) = \int_{\mathcal{A}} f d \mu_{A-L},
\een
and the proof is complete.
\end{proof}

It remains to prove Lemma 3:

\begin{proof}[Proof of Lemma 3]
Assume that the required neighborhood does not exist. Then for every neighborhood of the identity $U$, there is a point in $\mathcal{G}$, $x_U$, such that $x_U \cdot U$ is not a subset of any element of $\{ O_j \}_{j \in J}$. Ordering the neighborhoods of identity by inclusion we obtain a net of points $x_U$. Since $\mathcal{G}$ is compact, this net has a convergent subnet, which converges to some point $x_0 \in O_{j_0}$. Let $W$ be a neighborhood of the identity such that $x_0 \cdot W \subset O_{j_0}$, and let $W'$ be a neighborhood of the identity such that $W' \cdot W' \subset W$ (both $W$ and $W'$ exist because multiplication is continuous). Pick a neighborhood $V$, such that $x_V \in x_0 \cdot W'$, and $V \subset W'$. Then, since $W' \cdot W' \subset W$, we have $x_0 \cdot W' \cdot W' \subset x_0 \cdot W$. Therefore, we have that $x_V \cdot W' \subset x_0 \cdot W$ and thus $x_V \cdot V \subset x_0 \cdot W$. But this implies that $x_V \cdot V \subset O_{j_0}$ which is a contradiction.
\end{proof}

We have thus demonstrated that we can obtain the A-L measure from the product one. The reader should note that the procedure above, constructing a measure via restriction, is not limited to the case described and can be adapted to other situations with little effort.\\

Let us show now that $\mu_{prod}$ and $\mu_{A-L}$ have analogous support properties. To this end put a topology on $\mathcal{L}$ which makes it into a topological group such that there is a sequence of loops $\{\gamma_n\}_{n=1}^\infty$ converging to the identity such that any finite subcollection is holonomically independent (note that this is only possible if the dimension of $M$ is at least two. In the one dimensional case, it follows from the results of \cite{tlas} that $\mathcal{L}$ is either trivial if $M \simeq \mathbb{R}$, or is discrete if $M \simeq S^1$. In either case, the identity element is isolated.). An example of such a topology can be obtained by first giving $\tilde{\mathcal{L}}$ the Schwartz topology and then taking the quotient topology on $\mathcal{L}$ (an example of the needed sequence of loops is the set of circles of the Hawaiian earring). The Schwartz topology is the topology defined as follows: Embed the manifold $M$ into some $\mathbb{R}^k$ (which is always possible by Whitney's theorem), then the Schwartz topology on $\bar{\mathcal{L}}$ is the topology generated by the family of semi-norms $\{p_n\}_{n=0}^\infty$ where $p_n (\gamma) = \txt{sup}_{t \in I} || \gamma^{(n)} (t) ||$, where $|| \cdot ||$ stands for the usual Euclidean norm on the ambient $\mathbb{R}^k$. Needless to say, any weaker topology which makes $\mathcal{L}$ into a topological group will also satisfy the needed requirement.\\

We now have

\begin{lemma}
If $\mathcal{L}$ is given a topology as above, then the $\mu_{prod}$ measure of the set of functions in $\mathcal{F}$ continuous at some fixed point (e.g. the identity in $\mathcal{L}$) is zero.
\end{lemma}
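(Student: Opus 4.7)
The plan is to bound the set $T$ of $\phi \in \mathcal{F}$ continuous at the identity loop $e$ by an explicit set defined purely in terms of the given sequence $\gamma_n \to e$, and then to exploit the independence of coordinates under $\mu_{prod}$.

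First I would observe that, since $\gamma_n \to e$ in $\mathcal{L}$, any $\phi \in T$ must in particular satisfy $\phi(\gamma_n) \to \phi(e)$ in $G$. Hence $T \subset A$, where
\[ A = \{\phi \in \mathcal{F} : \phi(\gamma_n) \to \phi(e) \text{ in } G\}, \]
and it is enough to prove $\mu_{prod}(A) = 0$. Next I would verify that $e, \gamma_1, \gamma_2, \ldots$ are pairwise distinct elements of $\mathcal{L}$: the $\gamma_n$'s are mutually distinct because any finite subcollection is holonomically independent, and no $\gamma_n$ equals $e$ because every homomorphism sends $e$ to the identity of $G$, which would violate holonomic independence if some $\gamma_n$ coincided with $e$. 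Consequently, under $\mu_{prod}$, the coordinates $\phi(e), \phi(\gamma_1), \phi(\gamma_2), \ldots$ are independent random elements of $G$, each with the Haar distribution.

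Letting $d$ denote the Riemannian distance on $G$ coming from the bi-invariant metric $\rho$ used earlier, I would write
\[ A \subset \bigcup_{N=1}^{\infty} \bigcap_{n \geq N} \bigl\{ \phi : d(\phi(\gamma_n), \phi(e)) < \epsilon \bigr\} \]
for every $\epsilon > 0$. Choose $\epsilon > 0$ small enough that $p := |B(e_G, \epsilon)| < 1$; this is possible because $G$, being compact, connected and semi-simple, is a non-trivial Lie group (in fact $\dim G \geq 3$), so small balls have Haar measure strictly less than $1$. Applying Fubini to the decomposition $\mathcal{F} = G_e \times \prod_{n \geq 1} G_{\gamma_n} \times \prod_{\gamma \notin \{e, \gamma_1, \gamma_2, \ldots\}} G_\gamma$, and using translation invariance of Haar to replace $|B(g, \epsilon)|$ by $p$ for every $g$, one gets
\[ \mu_{prod}\Bigl( \bigcap_{n = N}^{M} \bigl\{ d(\phi(\gamma_n), \phi(e)) < \epsilon \bigr\} \Bigr) = \int_G p^{M-N+1}\, dg = p^{M-N+1}. \]
Letting $M \to \infty$ (monotone convergence from above for the decreasing intersection) gives measure $0$ for the infinite intersection, and the countable union over $N$ is still $\mu_{prod}$-null. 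Hence $\mu_{prod}(A) = 0$, and since $T \subset A$ the statement follows in the completed $\sigma$-algebra discussed earlier in the paper.

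The argument has no real obstacle; the two points that require care are the distinctness of $e, \gamma_1, \gamma_2, \ldots$ as coordinates in $\mathcal{F}$ (which is exactly what the holonomic independence of finite subcollections delivers) and the strict inequality $p < 1$ (which uses only that $G$ is a non-trivial Lie group). Equivalently, one could package the independence argument via the second Borel--Cantelli lemma, applied conditionally on $\phi(e)$ to the independent events $\{d(\phi(\gamma_n), \phi(e)) \geq \epsilon\}$, which each have probability $1-p > 0$; the conclusion is the same.
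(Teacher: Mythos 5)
Your proof is correct and follows essentially the same route as the paper's: continuity at the identity of $\mathcal{L}$ forces the tail of the independent coordinates $\phi(\gamma_n)$ into a set of Haar measure strictly less than $1$, and the infinite product of such factors vanishes. The only cosmetic difference is that you handle the unknown limit value $\phi(e)$ by conditioning on it via Fubini and invoking translation invariance of the Haar measure, whereas the paper covers $G$ by finitely many translates $U_1,\dots,U_k$ of a small open set of measure less than $1$ and takes a finite union over which translate eventually captures the sequence $\phi(\gamma_n)$.
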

 
\begin{proof}
Let the sequence of loops converging to the identity, such that each finite subcollection is holonomically independent, be $\{\gamma_n\}_{n=1}^\infty$. Let $U$ be an open neighbourhood of the identity element in $G$ whose Haar measure is strictly less than 1. Since we assume that $G$ is compact then $G$ has a finite cover by translates of $U$. Let us denote the elements of this cover by $\{U_1, \dots, U_k \}$. Note that the set of elements of $\mathcal{F}$ continuous at the identity of $\mathcal{L}$ is a subset of 
$$
\bigcup_{i=1}^k \Bigg [ \bigcup_{n=1}^\infty \Bigg (  \prod_{\gamma_n, \gamma_{n+1} \dots} U_i \times \prod_{\gamma \neq \gamma_n, \gamma_{n+1}, \dots} G  \Bigg )  \Bigg ]
$$

But each one of the sets in the brackets above has vanishing $\mu_{prod}$ measure.
\end{proof}

It is natural to expect, in view of the close relationship between the two measures, that $\mu_{A-L}$ also satisfies this property. In fact we can say a little more as the following theorem shows:

\begin{theorem}
If $\mathcal{L}$ is given a topology as described above, then the measure $\mu_{A-L}$ is supported on nowhere-continuous elements of $\mathcal{A}$. 
\end{theorem}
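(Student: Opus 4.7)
The plan is to mimic Lemma 4, exploiting the same probabilistic independence but now for $\mu_{A-L}$ in place of $\mu_{prod}$, with holonomic independence playing the role that product-measure independence played there. A useful preliminary reduction is that, for any homomorphism $\alpha \in \mathcal{A}$ between the topological groups $\mathcal{L}$ and $G$, continuity at a single loop forces continuity everywhere: if $\alpha$ is continuous at $\gamma_0$ and $\eta_n \to \gamma_1$ in $\mathcal{L}$, then by continuity of multiplication in $\mathcal{L}$ we have $\eta_n \gamma_1^{-1} \gamma_0 \to \gamma_0$, and applying $\alpha$ together with the continuity of multiplication in $G$ yields $\alpha(\eta_n) \to \alpha(\gamma_1)$. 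Hence $\alpha$ fails to be nowhere-continuous if and only if it is continuous at the identity of $\mathcal{L}$, and it suffices to show that the set of such $\alpha$'s has $\mu_{A-L}$ measure zero.

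Let $\{\gamma_n\}_{n=1}^\infty$ be the distinguished sequence of loops converging to the identity of $\mathcal{L}$ whose every finite subcollection is holonomically independent, and fix an open neighborhood $U \ni e$ in $G$ of Haar measure $|U|<1$. For each $N \geq 1$ set
\ben
E_N = \{\alpha \in \mathcal{A} : \alpha(\gamma_n) \in U \, \, \txt{for all} \, \, n \geq N \}.
\een
Each $E_N$ is Borel in $\mathcal{A}$ because every evaluation $\alpha \mapsto \alpha(\gamma_n)$ is continuous. Moreover, any $\alpha$ continuous at the identity of $\mathcal{L}$ satisfies $\alpha(\gamma_n) \to \alpha(\txt{id}) = e$ and hence lies in $\bigcup_N E_N$. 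The complement of $\bigcup_N E_N$ is therefore a Borel set contained in the nowhere-continuous part of $\mathcal{A}$, so the theorem follows once we prove $\mu_{A-L}\big(\bigcup_N E_N\big)=0$.

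For this, I would apply (\ref{eq:a-l}) to the collection $\xi = \{\gamma_N,\dots,\gamma_M\}$ with its own holonomically-independent generating sub-collection $\eta = \xi$, so that each word $w_i$ is a single letter. This gives at once
\ben
\mu_{A-L}\big( \{ \alpha \in \mathcal{A} : \alpha(\gamma_n) \in U \, \, \txt{for} \, \, N \leq n \leq M\} \big) = |U|^{M-N+1}.
\een
Letting $M \to \infty$ yields $\mu_{A-L}(E_N) = 0$ since $|U|<1$, and countable subadditivity over $N$ finishes the argument.

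The one subtlety I expect is that (\ref{eq:a-l}) is stated for continuous $f$ while $\chi_U$ is only Borel. This can be handled by sandwiching $\chi_U$ between continuous functions and using inner/outer regularity of the Borel probability $\mu_{A-L}$, or, more conceptually, by observing that (\ref{eq:a-l}) completely determines the pushforward of $\mu_{A-L}$ under the countable product of evaluations $\alpha \mapsto (\alpha(\gamma_n))_{n\geq 1}$ on all continuous cylinder functions on $G^{\mathbb{N}}$, and therefore on the full Borel $\sigma$-algebra of $G^{\mathbb{N}}$ (which coincides with the product $\sigma$-algebra because $G$ is second countable). Once this pushforward is identified with the product Haar measure on $G^{\mathbb{N}}$, the sequence $(\alpha(\gamma_n))_n$ is genuinely i.i.d. Haar under $\mu_{A-L}$, and the displayed estimate is immediate.
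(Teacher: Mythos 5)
Your proposal is correct and follows essentially the same route as the paper: reduce to continuity at the identity, use the holonomically independent sequence $\{\gamma_n\}$ converging to the identity, and exploit the fact that the holonomies $\alpha(\gamma_n)$ are i.i.d.\ Haar under $\mu_{A-L}$ to show the cylinder sets $E_N$ (the paper's $W_n$) are null. The only difference is technical: where you propose identifying the pushforward under $\alpha \mapsto (\alpha(\gamma_n))_n$ with the product Haar measure (or sandwiching by regularity), the paper simply bounds $\mu_{A-L}(W_n)$ from above by the integral of a continuous Urysohn-type cylindrical function $f(g_n)\cdots f(g_{n+m}) \leq |V|^m \to 0$, which sidesteps the indicator-function issue you correctly flagged.
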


\begin{proof}
It is obvious that a homomorphism between topological groups is continuous if and only if it is continuous at the identity. Take a sequence of loops $\{\gamma_n\}_{n=1}^\infty$ converging to the identity such that any finite subcollection is holonomically independent. Take open $U, V \subset G$, such that $\overline{U} \subset V$, $U$ contains the identity of $G$, and the Haar measure of $V$, $|V|$ is strictly less than $1$. Note that the set of $G$-valued functions which are continuous at the identity is a subset of $\cup_{n=1}^\infty W_n$ where $W_n$ is equal to the product of $U$'s for all $\gamma_i$'s, $i \geq n$ and $G$'s for all the other loops. $W_n$ is clearly in the $\sigma$-algebra generated by the open sets. Let $f:G \to \mathbb{R}$ be a positive continuous function which is equal to 1 on $\overline{U}$, 0 on $G-V$ and is less than or equal to 1 everywhere else. If we fix $n$ for now, let $F_m : \mathcal{A} \to \mathbb{R}$ be equal to $f(g_n) \cdot {\dots} \cdot f(g_{n+m})$ where $g_i$ is the holonomy around $\gamma_i$. Then, since $F_m$ is cylindrical, we have 
\besn
\mu_{A-L} (W_n) & \leq & \int d\mu_{A-L} F_m \\
& = & \int dg_n \dots dg_{n+m} f(g_n) \dots f(g_{n+m}) \\ 
& \leq & | V |^m \to 0 \qquad \txt{as} \qquad m \to \infty.
\eesn
 
Thus $\mu_{A-L}(W_n) = 0$ and $\mu_{A-L} \big ( \cup_{n=1}^\infty W_n \big ) = 0$. 
\end{proof}

A related aspect of the support of the A-L measure was demonstrated in \cite{thiemann}. There it was shown that the measure is supported on connections which have nowhere-continuous (as a function of the endpoint) parallel transport along a given path.\\

Note that Theorem 2 can be strengthened significantly if one combines it with the known results on the automatic continuity of homomorphisms \cite{rosendal}. More precisely, the continuity of a homomorphism often follows from a weaker requirement on the map, e.g. that of being Baire. We can thus conclude that (under various conditions \cite{rosendal}) the measure $\mu_{A-L}$ is supported on non-Baire elements of $\mathcal{A}$.   \\

\textbf{Acknowledgements:} The author would like to thank an anonymous referee for a very thorough and detailed list of suggestions and remarks. Taking them into account have greatly improved the manuscript and its readability.\\

\texttt{{\footnotesize Department of Mathematics, American University of Beirut, Beirut, Lebanon.}
}\\ \texttt{\footnotesize{Email address}} : \textbf{\footnotesize{tamer.tlas@aub.edu.lb}} 

\end{document}